\newcommand{\qbinom}[3]{\genfrac[]{0pt}{}{#1}{#2} _{#3}}
\renewcommand{\P}{\mathbb P}
\newcommand{\C}{\mathbb C}
\newcommand{\R}{\mathbb R}
\newcommand{\N}{\mathbb N}
\newcommand{\Z}{\mathbb Z}
\newcommand{\dd}{\mathrm{d}}
\newcommand\Oh{\mathrm{O}}
\newcommand\oh{\mathrm{o}}
\newtheorem{bsp}{Example}[section]
\newtheorem{satz}[bsp]{Theorem}
\newtheorem{lem}[bsp]{Lemma}
\newtheorem{bem}[bsp]{Remark}
\newtheorem{toyo}[bsp]{Corollary}
\title{Convergence Properties of Kemp's $q$-Binomial Distribution}
\author{Stefan Gerhold}
\author{Martin Zeiner}
\thanks{S.~Gerhold received financial support by Christian Doppler Laboratory for Portfolio Risk Management (PRisMa Lab).
The fruitful collaboration and support by the Bank Austria Creditanstalt (BA-CA) and the Austrian Federal Financing
Agency (\"OBFA) through CDG is gratefully acknowledged.}
\address[Stefan Gerhold]{Vienna University of Technology, Wiedner Hauptstra\ss{}e 8--10,
1040 Vienna, Austria}
\thanks{M.~Zeiner was supported by the NAWI-Graz project and the Austrian Science Fund project S9611 of the National Research Network S9600
Analytic Combinatorics and Probabilistic Number Theory.}
\address[Martin Zeiner]{Graz University of Technology, Steyrergasse 30, 8010 Graz, Austria}
\email{sgerhold at fam.tuwien.ac.at}
\email{zeiner at finanz.math.tu-graz.ac.at}
\date{\today}
\begin{document}

\begin{abstract}
  We consider Kemp's $q$-analogue of the binomial distribution. Several convergence results
  involving the classical binomial, the Heine, the discrete normal, and the Poisson distribution are established.
  Some of them are $q$-analogues of classical convergence properties.
  Besides elementary estimates, we apply Mellin transform asymptotics.
\end{abstract}

\keywords{$q$-binomial distribution, discrete normal distribution, Heine distribution, Mellin transform, limit theorems}

\subjclass[2000]{Primary: 60F05; Secondary: 33D15}

\maketitle

\section{Introduction}

Kemp~\cite{KempKemp91} introduced the following $q$-analogue $KB(n,\theta ,q)$ of the binomial distribution:
\[
 \P (X_{KB} = x) = \qbinom{n}{x}{q}\frac{\theta^x q^{x(x-1)/2}}{(-\theta ,q)_n}, \qquad 0 \leq x \leq n,\ 0 < \theta,
\]
where 
\[
 \qbinom{n}{k}{q} = \frac{(q,q)_n}{(q,q)_k(q,q)_{n-k}} \qquad \textnormal{and} \qquad (z,q)_n = \prod_{i=0}^{n-1} (1-zq^i)
\]
are the $q$-binomial coefficient and the $q$-shifted factorial.
See~\cite{JKK05,Kemp02,KempNew90} for properties and applications of this distribution;
for an introduction to the $q$-calculus see~\cite{GasRah}. For $q\to 1$, this distribution tends to a binomial distribution:
\[
 KB(n,\theta ,q) \to B\left( n,\frac{\theta}{1+\theta} \right) .
\]
For $n \to \infty$, it tends to the Heine distribution $H(\theta )$:
\[
 \P (X_H = x) = \frac{q^{x(x-1)/2}\theta^x}{(q,q)_x} e_q(-\theta), \qquad x \geq 0,
\]
where
\[
 e_q (z) = \frac{1}{(z,q)_{\infty}}, \qquad z \in \C \setminus \{ q^{-i} : \ i = 1,2,\dots \}
\]
is a $q$-analogue of the exponential function, since $e_q((1-q)z) \to e^z$. The Heine distribution is a $q$-analogue of the Poisson distribution, since $H((1-q)\theta)\to P(\theta)$. Moreover, we need a second $q$-analogue of the exponential function, the function $E_q(z) = (-z,q)_{\infty}$.
Note that we have $e_q(z)E_q(-z) = 1$.

The random variable $X_{KB}$ can be written as the sum of independent Bernoulli random variables~\cite{KempNew90},
which leads to the expressions
\begin{equation}\label{eq:mean var}
 \mu = \sum_{i=0}^{n-1} \frac{\theta q^i}{1+\theta q^i} \qquad \textnormal{and}
  \qquad \sigma ^2=\sum_{i=0}^{n-1} \frac{\theta q^i}{(1+\theta q^i)^2}
\end{equation}
for the mean and variance.
We are now interested in sequences of random variables $X_n$ with $X_n \sim KB(n, \theta_n, q)$, in particular we show that there are analogues to the convergence of the classical binomial distribution to the Poisson distribution and the Normal distribution, and that the limits $q\to 1$ and $n \to \infty$ can be exchanged.
Section~\ref{sec:konvPar} deals with two cases of convergent parameter~$\theta_n$, in particular with the case of constant mean. In Section~\ref{sec:langPar} we show that, if~$\theta_n$ grows sub-exponentially, the normalized~$X_n$ converge to a discrete normal distribution. In Section~\ref{sec:schnellPar} we examine the case of an exponentially growing parameter sequence~$\theta_n$.

\section{Convergent Parameter}\label{sec:konvPar}

As noted above we consider sequences of random variables~$X_n$ with $X_n \sim X_{KB}(n, \theta_n(q),q)$.
In the present section we will provide convergence results for two different sequences~$\theta_n(q)$ which both tend to
a limit as~$n\to\infty$.
In the following we need
\begin{lem}\label{lem:prod}
Let $(\theta_n)$ be a sequence of real numbers with limit $\theta\geq0$. Then
\[
\lim_{n\to \infty} \prod_{i=0}^{n-1} \left( 1+ \theta_n q^i \right) = E_q(\theta ) .
\]
\end{lem}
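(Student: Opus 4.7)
The plan is to take logarithms and reduce the problem to the convergence of an infinite series. Since the sequence $(\theta_n)$ is convergent it is bounded, say $|\theta_n|\le M$, and for $n$ large enough (say $n\ge n_0$) we have $\theta_n\ge -\theta/2$, so that $1+\theta_n q^i\ge 1-\tfrac{\theta}{2}q^i>0$ for every $i\ge 0$ (recall $0<q<1$ throughout the paper). Taking logarithms one obtains
\[
\log \prod_{i=0}^{n-1}(1+\theta_n q^i)=\sum_{i=0}^{\infty} a_n(i),\qquad a_n(i):=\log(1+\theta_n q^i)\,\mathbf{1}_{\{i<n\}}.
\]

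The central step is a discrete dominated convergence argument. For each fixed $i$, $a_n(i)\to\log(1+\theta q^i)$ by continuity of the logarithm and the fact that the indicator is eventually $1$. A uniform bound is supplied by $|\log(1+x)|\le 2|x|$ for $|x|\le \tfrac12$, applied to $x=\theta_n q^i$ once $i$ is large enough that $|x|\le\tfrac12$, and by crude boundedness for the finitely many small $i$. This produces a summable majorant of the form $C q^i$, and dominated convergence gives
\[
\sum_{i=0}^{\infty} a_n(i)\;\longrightarrow\;\sum_{i=0}^{\infty}\log(1+\theta q^i)=\log E_q(\theta).
\]
Exponentiating yields the claim.

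An equivalent and slightly more hands-on route, if one prefers to avoid invoking dominated convergence, is to split the product at a cutoff $N$: for any $\varepsilon>0$, choose $N$ so that $\prod_{i=N}^{\infty}(1+Mq^i)\le 1+\varepsilon$, then send $n\to\infty$ with $N$ fixed so the finite prefix $\prod_{i=0}^{N-1}(1+\theta_n q^i)\to\prod_{i=0}^{N-1}(1+\theta q^i)$ by continuity, and control the remaining tail by the $\varepsilon$-bound uniformly in $n$. There is essentially no real obstacle here; the only subtlety is making sure that the individual factors are positive (handled above) so that the logarithm is well defined, and the case $\theta=0$ is trivial since then $\theta_n\to 0$ forces the product and $E_q(0)=1$ to coincide in the limit.
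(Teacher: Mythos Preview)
Your argument is essentially correct, but the positivity bound is mis-stated: the inequality $\theta_n\ge -\theta/2$ does not yield $1-\tfrac{\theta}{2}q^i>0$ when $\theta\ge 2$ (take $i=0$), and when $\theta=0$ the bound $\theta_n\ge 0$ need not hold. The fix is immediate: since $\theta_n\to\theta\ge 0$, eventually $\theta_n>-\tfrac12$, whence $1+\theta_n q^i\ge 1-\tfrac12 q^i>0$ for all $i\ge 0$. With that adjustment both your dominated-convergence route and your cutoff route go through.

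The paper takes a different and shorter path: it exploits the monotonicity of $\prod_{i=0}^{n-1}(1+t q^i)$ in $t$ to sandwich the product between $\prod_{i=0}^{n-1}(1+(\theta-\epsilon)q^i)$ and $\prod_{i=0}^{n-1}(1+(\theta+\epsilon)q^i)$ once $n$ is large, lets $n\to\infty$ to trap the $\liminf$ and $\limsup$ between $E_q(\theta-\epsilon)$ and $E_q(\theta+\epsilon)$, and finishes by continuity of $E_q$. This squeeze argument avoids logarithms and dominated convergence entirely, at the cost of relying on the monotone structure. Your approach is more robust in that it does not use monotonicity and would adapt to more general summable coefficient sequences in place of $q^i$; the paper's is quicker for the case at hand.
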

\begin{proof}
For small $\epsilon > 0$ and $n$ large enough, we have
\[
  \prod_{i=0}^{n-1} \left( 1+ \left( \theta-\epsilon \right)q^i \right) \leq \prod_{i=0}^{n-1} \left( 1+\theta _n q^i \right)
   \leq \prod_{i=0}^{n-1} \left( 1+ \left( \theta+\epsilon \right)q^i \right),
\]
hence
\begin{align*}
  E_q(\theta-\epsilon) &= \lim_{n\to\infty}\prod_{i=0}^{n-1} \left( 1+ \left( \theta-\epsilon \right)q^i \right) \leq \liminf_{n\to\infty}\prod_{i=0}^{n-1} \left( 1+\theta _n q^i \right) \\
   &\leq \limsup_{n\to\infty}\prod_{i=0}^{n-1} \left( 1+\theta _n q^i \right) \leq\lim_{n\to\infty}\prod_{i=0}^{n-1} \left( 1+ \left( \theta+\epsilon \right)q^i \right) \\
   &= E_q(\theta+\epsilon).
\end{align*}
By continuity of~$E_q$, the lemma follows.
\end{proof}

The $q$-number $[x]_q$ is defined as
\[
 [x]_q := \frac{1-q^x}{1-q};
\]
for $q\to 1$, we have $[x]_q \to x$.
Now we can establish our first convergence result.
\begin{satz}
\[
\begin{CD}
  X_{KB}(n, \theta_n (q), q)  @>{n\to\infty}>>    H( (1-q)\lambda) \\
 @V{q\to1}VV  @VV{q \to 1}V \\
            B\left(n,\frac{\lambda}{n}\right)         @>>{n\to\infty}>    P(\lambda)
\end{CD}
\]
with $\theta_n(q) = \lambda/[n-\lambda]_q$.
\end{satz}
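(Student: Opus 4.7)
The plan is to verify the four arrows of the diagram separately; commutativity then comes for free, since all four corners equal $P(\lambda)$. The bottom arrow $B(n,\lambda/n)\to P(\lambda)$ is the classical Poisson limit theorem, and the right arrow $H((1-q)\lambda)\to P(\lambda)$ is recorded in the introduction as a general property of the Heine distribution. Hence only the top and left arrows really need attention. Since $KB$, $H$ and $B$ are all supported on $\N$, in each case it suffices to establish pointwise convergence of the probability mass functions, which automatically yields convergence in distribution.

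For the top arrow, fix $q\in(0,1)$ and let $n\to\infty$. Since
\[
[n-\lambda]_q = \frac{1-q^{n-\lambda}}{1-q} \longrightarrow \frac{1}{1-q},
\]
we have $\theta_n(q) \to (1-q)\lambda$. For each fixed $x$ a direct computation gives $\qbinom{n}{x}{q} \to 1/(q,q)_x$ and $\theta_n(q)^x \to ((1-q)\lambda)^x$. The decisive step is controlling the denominator $(-\theta_n(q),q)_n = \prod_{i=0}^{n-1}(1+\theta_n(q) q^i)$, which is exactly what Lemma~\ref{lem:prod} is built for: applying it with the convergent sequence $\theta_n(q)$ yields $(-\theta_n(q),q)_n \to E_q((1-q)\lambda) = 1/e_q(-(1-q)\lambda)$. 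Multiplying the three limits reproduces the Heine mass function with parameter $(1-q)\lambda$.

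For the left arrow, fix $n$ and let $q\to 1$. By continuity of each factor of the PMF in $(\theta,q)$ one has $\qbinom{n}{x}{q}\to\binom{n}{x}$, $q^{x(x-1)/2}\to 1$, and $(-\theta,q)_n=\prod_{i=0}^{n-1}(1+\theta q^i)\to(1+\theta)^n$. Since only finitely many such factors appear for fixed $n$, letting $\theta$ vary continuously with $q$ does not disturb the limits, so with $\theta_n(q)=\lambda/[n-\lambda]_q\to\lambda/(n-\lambda)$ and the identity $(\lambda/(n-\lambda))/(1+\lambda/(n-\lambda))=\lambda/n$, the PMF tends to $\binom{n}{x}(\lambda/n)^x(1-\lambda/n)^{n-x}$, which is the $B(n,\lambda/n)$ mass function.

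The only potential obstacle is to see that Lemma~\ref{lem:prod} is legitimately applicable in the top arrow: its hypothesis is exactly that the driving sequence converges, and we have verified that $\theta_n(q)$ does, with $q$ fixed. Everything else amounts to routine continuity arguments on the explicit PMF, so no Mellin machinery or finer asymptotics is needed for this particular statement.
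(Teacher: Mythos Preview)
Your proof is correct and follows essentially the same approach as the paper: the paper likewise observes that only the top arrow needs to be shown (the other three being standard or recorded in the introduction) and obtains it by letting $\theta_n(q)\to(1-q)\lambda$, sending $\qbinom{n}{x}{q}\to 1/(q,q)_x$, and applying Lemma~\ref{lem:prod} to the product in the denominator. You are slightly more explicit than the paper in spelling out the left arrow via continuity and the identity $\tfrac{\theta}{1+\theta}=\tfrac{\lambda}{n}$ for $\theta=\lambda/(n-\lambda)$, but this is exactly the $KB(n,\theta,q)\to B(n,\theta/(1+\theta))$ fact already stated in the introduction.
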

\begin{proof}
We only have to show $ X_{KB}(n, \theta_n (q), q) \to H((1-q)\lambda)$. Note that
\begin{eqnarray*}
\P (X_n = x) & = & \qbinom{n}{x}{q} \left( \frac{\lambda}{[n-\lambda ]_q}\right)^x \frac{q^{x(x-1)/2}}{\prod_{i=0}^{n-1} \left( 1+\frac{\lambda}{[n-\lambda]_q}q^i\right)} \\
             & = & \qbinom{n}{x}{q} \frac{\lambda^x (1-q)^x}{(1-q^{n-\lambda})^x} \frac{q^{x(x-1)/2}}{\prod_{i=0}^{n-1} \left( 1+\frac{\lambda(1-q)}{1-q^{n-\lambda}}q^i\right)} \\
             &\to& \frac{q^{x(x-1)/2}((1-q)\lambda)^x}{(q,q)_x} e_q(-(1-q)\lambda), 
\end{eqnarray*}
where the last line follows from Lemma~\ref{lem:prod}.
\end{proof}

For our next result, we note the following elementary fact.
\begin{lem}\label{lem:zp}
Let $f_n(x)$, $n \in \N$, be a sequence of functions that are increasing in~$x$, and suppose that for each~$n$ there is a unique solution~$x_n$ of $f_n(x)=0$. Moreover, assume that $f_n(x) < f_m(x)$ for $n < m$ and that~$f_n$ converges pointwise to a limit~$f$ with a unique solution $\hat{x}$ of $f(x)=0$. Then $(x_n)_{n \in \N}$ converges to $\hat{x}$.
\end{lem}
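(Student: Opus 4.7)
The plan is to exploit the two monotonicity hypotheses in tandem: monotonicity of each $f_n$ in $x$ together with monotonicity of the sequence $(f_n)$ in $n$ forces the sequence of zeros $(x_n)$ to be monotone itself, hence convergent to some $L \in [-\infty,\infty)$, and then a squeeze argument against the limiting function $f$ will pin down $L=\hat x$.

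First I would establish that $(x_n)$ is \emph{decreasing}. Fix indices $n<m$. By hypothesis $f_n(x_m) < f_m(x_m) = 0 = f_n(x_n)$, so $f_n(x_m) < f_n(x_n)$; since $f_n$ is increasing in $x$, this forces $x_m \leq x_n$. Consequently $(x_n)$ is a decreasing sequence bounded above by $x_1$, so it has a limit $L \in [-\infty, x_1]$.

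Next I would record two simple facts about $f$. As the pointwise limit of increasing functions, $f$ is itself non-decreasing. Combined with the uniqueness of the zero $\hat x$, this gives $f(y)<0$ for $y<\hat x$ and $f(y)>0$ for $y>\hat x$. Now I would rule out $L\neq\hat x$ in two cases. If $L>\hat x$, pick $y\in(\hat x,L)$. Since $x_n\geq L>y$, monotonicity of $f_n$ in $x$ yields $f_n(y)\leq f_n(x_n)=0$, and passing to the limit gives $f(y)\leq 0$, contradicting $f(y)>0$. If $L<\hat x$ (including $L=-\infty$), pick $y\in(L,\hat x)$; eventually $x_n<y$, so $f_n(y)\geq f_n(x_n)=0$, and in the limit $f(y)\geq 0$, contradicting $f(y)<0$. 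Hence $L=\hat x$.

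There is no real obstacle in this argument; it is entirely elementary. The only thing to be careful about is the bookkeeping of strict versus non-strict inequalities (the hypothesis is strict in $n$ but only non-strict in $x$, which is exactly what the cross-comparison needs) and allowing the possibility $L=-\infty$ in the second contradiction step, so that one never has to assume a priori boundedness of $(x_n)$ from below.
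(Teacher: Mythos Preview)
Your proof is correct and follows the same overall strategy as the paper: use the two monotonicity hypotheses to show $(x_n)$ is decreasing, hence convergent, and then compare against $f$ to identify the limit.

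The only notable difference is in how the lower bound is handled. The paper observes directly that $x_n \geq \hat{x}$ for every $n$: since $(f_n)$ increases pointwise to $f$, one has $f_n(\hat{x}) \leq f(\hat{x}) = 0 = f_n(x_n)$, whence $\hat{x} \leq x_n$. This immediately forces the limit $\tilde{x}$ to be finite with $\tilde{x} \geq \hat{x}$, and a single inequality $0 = \lim f_n(x_n) \geq \lim f_n(\tilde{x}) = f(\tilde{x})$ finishes the argument. Your version instead allows $L \in [-\infty, x_1]$ a priori and rules out both $L > \hat{x}$ and $L < \hat{x}$ by separate contradiction arguments. Both are fine; the paper's route is a little shorter because it exploits $f_n \leq f$ to dispose of the $L < \hat{x}$ case (and the $L = -\infty$ possibility) before it ever arises.
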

\begin{proof}
Since $f_n(x) < f_m(x)$ for $n < m$, the sequence $(x_n)$ is decreasing and $x_n \geq \hat{x}$. Therefore $x_n$ converges to a limit $\tilde{x}\geq\hat{x}$. Moreover,
\[
 0 = \lim_{n \to \infty} f_n(x_n) \geq \lim_{n \to \infty} f_n(\tilde{x}) = f(\tilde{x}),
\]
thus $\tilde{x}\leq\hat{x}$.
\end{proof}

Our second convergence result is analogous to the classical convergence
of the binomial distribution with constant mean to the Poisson distribution.

\begin{satz} Fix $\mu > 0$. Then we have
\[
\begin{CD}
  X_{KB}(n, \theta_n (q), q)  @>{n\to\infty}>>    H( \theta(q)) \\
 @V{q\to1}VV  @VV{q \to 1}V \\
            B\left(n,\frac{\mu}{n}\right)         @>>{n\to\infty}>    P(\mu)
\end{CD}
\]
with $\theta_n(q)$ such that $\mu_n=\mu$ and $\theta(q) = \lim_{n\to\infty} \theta_n(q)$.
\end{satz}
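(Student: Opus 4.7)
The plan is to reduce everything, as in the preceding theorem, to showing the top arrow $X_{KB}(n,\theta_n(q),q)\to H(\theta(q))$ of the diagram, the other arrows being either classical or already recorded. This reduction proceeds in two logically separate steps: first establishing that the implicit defining sequence $\theta_n(q)$ actually has a limit $\theta(q)$, and then establishing pointwise convergence of the point masses.

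For the existence of $\theta(q)=\lim_n \theta_n(q)$ I would apply Lemma~\ref{lem:zp} with
\[
 f_n(\theta)=\sum_{i=0}^{n-1}\frac{\theta q^i}{1+\theta q^i}-\mu,
 \qquad
 f(\theta)=\sum_{i=0}^{\infty}\frac{\theta q^i}{1+\theta q^i}-\mu.
\]
Each summand is strictly increasing in $\theta>0$, so $f_n$ is increasing; adding one more non-negative term gives $f_n<f_{n+1}$, and for fixed $\theta$ the series converges (dominated by $\theta\sum q^i$), so $f_n\to f$ pointwise. A unique zero of $f_n$ exists as soon as $n>\mu$, since $f_n(0)=-\mu<0$ and $f_n(\theta)\to n-\mu>0$ as $\theta\to\infty$; by monotone convergence $f(\theta)\to\infty$ as $\theta\to\infty$, so $f$ too has a unique positive zero $\theta(q)$. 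The defining equation $\mu_n=\mu$ is exactly $f_n(\theta_n(q))=0$, and Lemma~\ref{lem:zp} therefore yields $\theta_n(q)\to \theta(q)$.

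For the convergence of the law, I would mimic the computation in the preceding theorem. Writing
\[
 \P(X_n=x) = \qbinom{n}{x}{q}\,\theta_n(q)^x\,\frac{q^{x(x-1)/2}}{\prod_{i=0}^{n-1}(1+\theta_n(q)\,q^i)},
\]
I would pass to the limit $n\to\infty$ term by term: $\qbinom{n}{x}{q}\to 1/(q,q)_x$ since $(q,q)_n,(q,q)_{n-x}\to (q,q)_\infty$; $\theta_n(q)^x\to \theta(q)^x$ by the first step; and the denominator tends to $E_q(\theta(q))$ by Lemma~\ref{lem:prod} applied to the convergent sequence $\theta_n(q)\to\theta(q)$. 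Using $1/E_q(\theta(q))=e_q(-\theta(q))$, the limit is exactly the Heine mass $\theta(q)^x q^{x(x-1)/2}e_q(-\theta(q))/(q,q)_x$.

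The one real obstacle is the first step: verifying all four hypotheses of Lemma~\ref{lem:zp} for $f_n$. Once that is done, the probability-level convergence is essentially the same routine calculation as in the previous theorem, with Lemma~\ref{lem:prod} doing the non-trivial work on the infinite product.
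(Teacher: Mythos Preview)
Your treatment of the top arrow is correct and essentially identical to the paper's: both apply Lemma~\ref{lem:zp} to the functions $f_n(\theta)=\mu_n(\theta,q)-\mu$ to extract $\theta_n(q)\to\theta(q)$, and then invoke Lemma~\ref{lem:prod} for the denominator product.

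The gap is in your first paragraph, where you dismiss the two vertical arrows as ``either classical or already recorded.'' In the preceding theorem the parameter $\theta_n(q)=\lambda/[n-\lambda]_q$ was given explicitly, so its $q\to1$ limit was immediate. Here $\theta_n(q)$ and $\theta(q)$ are defined only implicitly through the mean condition, and the recorded facts $KB(n,\theta,q)\to B(n,\tfrac{\theta}{1+\theta})$ and $H((1-q)\lambda)\to P(\lambda)$ do not by themselves tell you what happens to the $q$-dependent parameters. Concretely, for the left arrow you must show $\theta_n(q)\to \mu/(n-\mu)$ as $q\to1$; the paper does this by observing that $\mu_n(\theta,q)$ is increasing in $q$ and applying Lemma~\ref{lem:zp} a second time (now with $q$ as the variable and limit function $n\theta/(1+\theta)-\mu$). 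For the right arrow you must show $\theta(q)/(1-q)\to\mu$ as $q\to1$; the paper rewrites this quantity as the unique zero of $\theta\mapsto\mu_\infty((1-q)\theta,q)-\mu$, notes monotonicity and the pointwise limit $\mu_\infty((1-q)\theta,q)\to\theta$ (which is the mean of $P(\theta)$), and applies Lemma~\ref{lem:zp} a third time. Neither step is hard, but neither is already on record, so your reduction to the top arrow alone is incomplete.
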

\begin{proof}
First we check that for given $\mu > 0$ and fixed $q$ there is a unique sequence $(\theta_n(q))_{n \geq N}$, such that $\mu_n(\theta_n(q),q)=\mu$. The function $\mu_n(\theta,q)$ is strictly increasing in $\theta$ and $\mu_n(0,q) = 0$.
Since
\[
 \mu_n(q^{-n+1},q) \geq \sum_{i=0}^{n-1} \frac{q^{i-n+1}}{2q^{i-n+1}} = \frac{n}{2}
\]
and $\mu_n(\theta,q)$ is continuous in $\theta$, there exists a unique solution $\theta_n(q)$ of $\mu_n(\theta,q) = \mu$ for each $n \geq2\mu$. As $\mu_n(\theta,q)$ is increasing in $n$, we can apply Lemma~\ref{lem:zp} to obtain $\lim_{n\to\infty} \theta_n = \theta(q)$, with $\theta(q)$ the unique solution of $\mu_{\infty}(\theta,q) = \mu$.
Thus $X_{KB}(n,\theta_n(q),q)\to H(\theta(q))$ by Lemma~\ref{lem:prod}.

The function $\mu_n(\theta,q)$ is also increasing in $q$, so we get $\theta_n(q) \to \tfrac{\mu}{n-\mu}$ (or equivalently $\tfrac{\theta_n (q)}{1+\theta_n (q)} \to \tfrac{\mu}{n}$) for $q\to 1$ by Lemma~\ref{lem:zp}. So $X_{KB}(n,\theta_n(q),q)\to B\left(n,\frac{\mu}{n}\right)$.

It remains to check that $\theta(q)/(1-q)$ converges to $\mu$ for $q\to 1$ (then $H(\theta(q)) \to P(\mu)$). The value $\theta(q)/(1-q)$ is the unique solution of $\mu_\infty((1-q)\theta,q)=\mu$. Moreover, $\mu_\infty((1-q)\theta,q)$ is increasing in $x$ and $\theta$ and $\lim_{q\to 1} \mu_\infty((1-q)\theta,q) = \theta$ (because $H((1-q)\theta) \to P(\theta)$). Thus we can again apply Lemma~\ref{lem:zp}.
\end{proof}

\section{Sub-Exponentially Increasing Parameter}\label{sec:langPar}

Now we consider parameter sequences $\theta_n = q^{-f(n)}$ with $f(n) \to \infty$ and $n-f(n) \to \infty$ for $n \to \infty$.
These assumptions on~$f(n)$ will be in force throughout the section.
Theorems~\ref{satz:konstGer0} and \ref{satz:konstWag0} and Lemmas~\ref{toyo:konst0}--\ref{toyo:mufloor}
are devoted to the asymptotic behavior of the sequence~$(\mu_n)$ of means.
As they tend to infinity, we will normalize our sequence of random variables
to~$(X_n-\mu_n)/\sigma_n$. Still, this sequence does not converge in distribution without
further assumptions on~$f(n)$. A fruitful way to proceed is to pick subsequences along
which the fractional part~$\{f(n)\}$ is constant. Theorem~\ref{th:Hauptsatz0} shows that this
induces convergence to discrete normal distributions.

To investigate the sequence of means,
we begin by providing an elementary estimate for the variance.
\begin{lem}\label{lem:sigmaKonv0}
If $\theta_n = q^{-f(n)}$ with the above assumptions on~$f(n)$, then the sequence of variances satisfies
$\sigma_n^2 \leq 2/(1-q)$.
\end{lem}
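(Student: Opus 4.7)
The plan is to bound the summand $\frac{\theta_n q^i}{(1+\theta_n q^i)^2}$ termwise and then split the sum at the transition point $i \approx f(n)$, reducing both pieces to geometric series.

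First I would substitute $\theta_n q^i = q^{i-f(n)}$, so that each term of
\[
 \sigma_n^2 = \sum_{i=0}^{n-1} \frac{q^{i-f(n)}}{(1+q^{i-f(n)})^2}
\]
depends only on the quantity $t_i := q^{i-f(n)}$. The key elementary fact is that for any $t > 0$ one has $(1+t)^2 \geq t^2$ and $(1+t)^2 \geq 1$, which yields the uniform two-sided bound $\frac{t}{(1+t)^2} \leq \min\!\left( t,\, \tfrac{1}{t}\right)$.

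Next I would split the index range at $m := \lfloor f(n)\rfloor$. For $i \leq m$ we have $t_i = q^{i-f(n)} \geq 1$, so the first bound gives $\frac{t_i}{(1+t_i)^2} \leq q^{f(n)-i}$; for $i > m$ we have $t_i < 1$, so $\frac{t_i}{(1+t_i)^2} \leq q^{i-f(n)}$. Writing $f(n) = m + \{f(n)\}$ and reindexing by $k = m-i$ (respectively $k = i-m-1$) in the two sums, both are dominated by the infinite geometric series
\[
 \sum_{k=0}^{\infty} q^{k+\{f(n)\}} \;\leq\; \frac{1}{1-q}
 \qquad\text{resp.}\qquad
 \sum_{k=0}^{\infty} q^{k+1-\{f(n)\}} \;\leq\; \frac{1}{1-q}.
\]
Adding the two pieces yields $\sigma_n^2 \leq \frac{2}{1-q}$.

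There is no real obstacle here beyond keeping careful track of where the exponent $i - f(n)$ changes sign; the assumptions $f(n) \to \infty$ and $n - f(n) \to \infty$ ensure that the split point $m$ lies inside the summation range for all sufficiently large $n$, but the bound is only strengthened (not broken) when it does not, since we are dominating the finite sum by an infinite geometric series in either case.
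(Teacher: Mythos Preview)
Your proof is correct and follows essentially the same approach as the paper: split the sum at $\lfloor f(n)\rfloor$, use the bound $\tfrac{t}{(1+t)^2}\leq \min(t,1/t)$ on each piece, reindex, and dominate by two geometric series each summing to at most $1/(1-q)$. The paper's argument is identical in substance, differing only in notation.
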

\begin{proof}
By~\eqref{eq:mean var}, the variance~$\sigma_n^2$ equals
\begin{align}
\sum_{i=0}^n \frac{q^{i-f(n)}}{(1+q^{i-f(n)})^2} &= \sum_{i=0}^{\lfloor f(n) \rfloor} \frac{q^{i-f(n)}}{(1+q^{i-f(n)})^2} + \sum_{i=\lfloor f(n) \rfloor+1}^n \frac{q^{i-f(n)}}{(1+q^{i-f(n)})^2} \nonumber\\
& =  \sum_{i=0}^{\lfloor f(n) \rfloor} \frac{q^{-\{f(n)\}-i}}{(1+q^{-\{ f(n) \} - i})^2} + \sum_{i=0}^{n-\lfloor f(n) \rfloor-1} \frac{q^{i+1-\{ f(n) \}}}{(1+q^{i+1-\{ f(n) \}})^2}  \label{equ:lemSigmaKonv0}\\
& <  \sum_{i=0}^{\infty} \frac{1}{q^{-\{ f(n) \} - i}} + \sum_{i=0}^{\infty} q^{i+1-\{ f(n)\}} \nonumber\\
& \leq  \sum_{i=0}^\infty q^i + \sum_{i=0}^\infty q^i = \frac{2}{1-q}. \nonumber
\end{align}
\end{proof}

The following result about the sequence of means does not reveal the structure of the $\mathrm{O}(1)$ term, but
will be useful later on (Lemma~\ref{toyo:konstFloor0}).
\begin{satz}\label{satz:konstGer0}
Let $X_n \sim KB(n,\theta_n,q)$ with $\theta_n = q^{-f(n)}$. Then, for $n\to\infty$,
\[
 \mu_n = f(n) + c( \{ f(n) \},q) + \mathrm{o}(1),
\]
where
\[
 c( \{ f(n) \},q) := 1 - \frac{1}{1+q^{-\{ f(n) \}}} - \left\{ f(n) \right \} -\sum_{\ell \geq 0} \frac{1}{1+q^{-\ell-\{ f(n) \}-1}} + \sum_{\ell \geq 0}\frac{1}{1+q^{-\ell+\{ f(n) \}-1}} = \mathrm{O}(1).
\]
\end{satz}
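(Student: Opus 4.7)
The plan is to start from the explicit expression
\[
\mu_n=\sum_{i=0}^{n-1}\frac{q^{i-f(n)}}{1+q^{i-f(n)}}
\]
given by~\eqref{eq:mean var}, and to split this sum at $i=\lfloor f(n)\rfloor$ so as to separate the ``large'' terms (where $q^{i-f(n)}\ge1$) from the ``small'' ones. Writing $F=\lfloor f(n)\rfloor$ and $\phi=\{f(n)\}$, we have $f(n)=F+\phi$, and the substitutions $j=F-i$ (lower part) and $k=i-F-1$ (upper part) rewrite the two halves in terms of the fractional part~$\phi$ alone, mirroring the manipulation already used in~\eqref{equ:lemSigmaKonv0}.

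For the lower part, I apply the identity $\frac{a}{1+a}=1-\frac{1}{1+a}$ to obtain
\[
\sum_{j=0}^{F}\frac{q^{-j-\phi}}{1+q^{-j-\phi}}=(F+1)-\sum_{j=0}^{F}\frac{1}{1+q^{-j-\phi}},
\]
and $F+1=f(n)+(1-\phi)$ already produces the leading $f(n)$ together with the $1-\phi$ contribution of the claim. For the upper part, I use $\frac{a}{1+a}=\frac{1}{1+a^{-1}}$, so that
\[
\sum_{k=0}^{n-F-2}\frac{q^{k+1-\phi}}{1+q^{k+1-\phi}}=\sum_{k=0}^{n-F-2}\frac{1}{1+q^{-k-1+\phi}}.
\]

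The next step is to extend both remaining finite sums to infinite sums, which is where the hypotheses $f(n)\to\infty$ and $n-f(n)\to\infty$ enter. The lower-part tail $\sum_{j\ge F+1}(1+q^{-j-\phi})^{-1}$ is dominated by $\sum_{j\ge F+1}q^{j+\phi}=\mathrm{O}(q^{f(n)})=\mathrm{o}(1)$, and the upper-part tail $\sum_{k\ge n-F-1}(1+q^{-k-1+\phi})^{-1}$ is dominated by $\sum_{k\ge n-F-1}q^{k+1-\phi}=\mathrm{O}(q^{n-f(n)})=\mathrm{o}(1)$. Splitting off the $j=0$ term $(1+q^{-\phi})^{-1}$ from the first infinite sum then reorganizes the expression into exactly the form of $c(\phi,q)$ stated in the theorem, giving $\mu_n=f(n)+c(\{f(n)\},q)+\mathrm{o}(1)$.

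Finally, for the $\mathrm{O}(1)$ claim, one notes that $\phi\in[0,1)$ makes each of the four bounded pieces $1-(1+q^{-\phi})^{-1}$, $-\phi$, and the two series uniformly bounded: the series converge geometrically in $\ell$, and substituting the extreme values of $\phi$ shows their dependence on $\phi$ is bounded. The only potentially delicate point is bookkeeping of signs and index shifts; no deeper tool is needed. I expect the actual obstacle to be purely notational: making sure the change of summation variable and the identity $(1+q^{-x})^{-1}=q^x/(1+q^x)$ are applied consistently so that the final five-term expression matches $c(\phi,q)$ verbatim.
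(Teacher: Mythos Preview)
Your proof is correct and, for the lower half of the sum, more direct than the paper's argument. Both proofs start from $\mu_n=\sum_{i=0}^{n-1}q^{i-f(n)}/(1+q^{i-f(n)})$, split at $i=\lfloor f(n)\rfloor$, and handle the upper portion identically (index shift, extend to infinity with error $\mathrm{O}(q^{n-f(n)})$). The difference is in the lower portion: the paper rewrites each term as $1/(1+q^{f(n)-i})$, expands this as the alternating geometric series $\sum_{\ell\ge0}(-1)^\ell q^{\ell(f(n)-i)}$, interchanges the two summations, sums the resulting finite geometric series in $i$, drops an $\mathrm{O}(q^{f(n)})$ tail, and then \emph{resums} the remaining series in $\ell$ back into the closed form $-\sum_{j\ge0}(1+q^{-j-1-\{f(n)\}})^{-1}$. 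You bypass this detour entirely by using $a/(1+a)=1-1/(1+a)$ to produce $F+1$ minus a convergent tail directly, then extending that tail to infinity with the same $\mathrm{O}(q^{f(n)})$ error. Your route is shorter and avoids the double-series manipulation; the paper's route, on the other hand, makes the $\mathrm{O}(q^{f(n)})$ error term emerge naturally from truncating a power series rather than from a separate tail estimate. Either way the bookkeeping lands on the same five-term expression for $c(\{f(n)\},q)$, and your $\mathrm{O}(1)$ justification (geometric domination of both $\ell$-series uniformly in $\phi\in[0,1)$) is adequate.
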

\begin{proof} We start {}from
\begin{equation}\label{eq:mu}
 \mu_n = \sum_{i=0}^{n-1} \frac{q^{i-f(n)}}{1+q^{i-f(n)}} = \sum_{i=0}^{n-1} \frac{1}{1+q^{f(n)-i}}
\end{equation}
and split the sum into two parts (w.l.o.g.\ $f(n)<n$):
\begin{eqnarray*}
\sum_{i=0}^{\lfloor f(n) \rfloor - 1} \frac{1}{1+q^{f(n)-i}} & = & \sum_{i=0}^{\lfloor f(n) \rfloor - 1} \sum_{\ell \geq 0}(-1)^\ell q^{\ell (f(n)-i)} \\
& = & \sum_{\ell \geq 0} (-1)^\ell q^{\ell f(n)} \sum_{i=0}^{\lfloor f(n) \rfloor - 1} q^{-\ell i} \\
& = & \lfloor f(n) \rfloor + \sum_{\ell \geq 1} (-1)^\ell q^{\ell f(n)}\frac{1-q^{-\ell\lfloor f(n) \rfloor}}{1-q^{-\ell}} \\
& = & \lfloor f(n) \rfloor - \sum_{\ell \geq 1} \frac{(-1)^\ell q^{\ell \{ f(n) \}}}{1-q^{-\ell}} + \Oh \left( q^{f(n)} \right) \\
& = & \lfloor f(n) \rfloor + \sum_{\ell \geq 1} \frac{q^\ell(-1)^\ell q^{\ell \{ f(n) \}}}{1-q^{\ell}} + \Oh \left( q^{f(n)} \right) \\
& = & \lfloor f(n) \rfloor + \sum_{\ell \geq 1} q^\ell(-1)^\ell q^{\ell \{ f(n) \}} \sum_{j\geq 0} q^{\ell j} + \Oh \left( q^{f(n)} \right) \\
& = & \lfloor f(n) \rfloor + \sum_{j \geq 0} \sum_{\ell \geq 1} \left( -q^{j+1+\{ f(n) \}} \right)^\ell + \Oh \left( q^{f(n)} \right) \\
& = & \lfloor f(n) \rfloor + \sum_{j \geq 0} \frac{-q^{j+1+\{ f(n) \}}}{1+q^{j+1+\{ f(n) \}}} + \Oh \left( q^{f(n)} \right) \\
& = & \lfloor f(n) \rfloor - \sum_{j \geq 0} \frac{1}{1+q^{-j-1-\{ f(n) \}}} + \Oh \left( q^{f(n)} \right) \\
\end{eqnarray*}
For the upper portion of the sum, we find
\begin{align*}
\sum_{i= \lfloor f(n) \rfloor + 1}^{n-1} \frac{1}{1+q^{f(n) -i}} &= \sum_{i= \lfloor f(n) \rfloor + 1}^{\infty} \frac{1}{1+q^{f(n) -i}} + \Oh
 \left(q^{n-f(n)} \right) \\
 &= \sum_{i= 0}^{\infty} \frac{1}{1+q^{\{f(n)\} -i-1}} + \Oh\left(q^{n-f(n)} \right),
\end{align*}
since
\[
 \sum_{i=n}^{\infty} \frac{1}{1+q^{f(n) -i}} = \sum_{i=0}^\infty \frac{1}{1+q^{f(n)-n-i}} \leq \sum_{i=0}^\infty \frac{1}{q^{f(n)-n-i}} = q^{n-f(n)}\frac{1}{1-q} .
\]
\end{proof}
%

In the limit $q\to 1$, the term $c(\{f(n)\},q)$ tends to $\frac12$. To see this, apply the Euler-Maclaurin formula to
\[
f(x) = \frac{1}{1+q^{-x-b}}
\]
with $b > 0$, which yields
\begin{equation}\label{equ:eul}
\sum_{\ell \geq 0} f(\ell ) = \int\limits_{0}^{\infty} f(x) \dd x + \frac{f(0)}{2} +\frac{1}{12}f'(x) \big|_{x=0}^{\infty} + R_2
\end{equation}
with
\[
R_2 = -\frac12\int\limits_{0}^{\infty} B_2\left( \{x\} \right) f''(x) \dd x .
\]
Since
\[
f''(x) = \frac{ (\log q )^2q^{x+b}\left( 1-q^{x+b} \right)}{\left(1+q^{x+b}\right)^3}
\]
does not change sign, we have
\begin{align*}
| R_2 | & \leq \frac{1}{12} \int\limits_{0}^{\infty} |f''(x)| \dd x = \frac{1}{12}\int\limits_{0}^{\infty} f''(x) \dd x \\
 & = -(\log q)q^{-b}(1+q^{-b})^{-2} = \oh(1), \qquad q\to 1.
\end{align*}
The first integral in~(\ref{equ:eul}) is
\begin{align*}
\int\limits_{0}^{\infty} f(x) \dd & x = -\frac{\log \left( 1+q^b\right)}{\log q} = \frac{\log 2}{1-q} - \frac{\log 2 + b}{2} + \Oh\left( 1-q\right), \qquad q\to 1.
\end{align*}
So we have
\[
\sum_{\ell \geq 0} f(\ell) = \frac{\log 2}{1-q} - \frac{\log 2 + b}{2} +\frac{1}{4} + \oh(1), \qquad q \to 1.
\]
Application to the sums appearing in $c(\{f(n)\},q)$ gives
\[
c(\{f(n)\},q)= \frac12-\{f(n)\} + \{f(n)\} + \oh(1).
\]
Note that for $q \to 1$ the error term in the representation for $\mu_n$ increases. This is why the limits for $q \to 1$ and $n \to \infty$ can't be exchanged ($\mu_n$ tends to $n/2$ for $q \to 1$).

The following theorem provides a different representation of the $\mathrm{O}(1)$ term
{}from Theorem~\ref{satz:konstGer0}, which shows that it is a $\tfrac12$-periodic function of~$f(n)$.
\begin{satz}\label{satz:konstWag0}
Let $X_n \sim KB(n,\theta_n,q)$ with $\theta_n = q^{-f(n)}$. Then, as $n\to\infty$,
\begin{eqnarray}\label{equ:satzKonstWag0}
 \mu_n = f(n) + \frac12+\sum_{k > 0} \frac{2\pi\sin(2kf(n)\pi)}{\log q \sinh \left( \frac{2k\pi^2}{\log q} \right)} + \Oh \left( q^{\min(f(n)/2,n-f(n))} \right).
\end{eqnarray}
\end{satz}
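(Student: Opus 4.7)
My plan is to apply the Mellin transform to the sum $\mu_n$ directly, shift the contour just past the imaginary axis, and identify the trigonometric series in~\eqref{equ:satzKonstWag0} as the sum of residues at the purely imaginary poles $s = 2\pi i k/\log q$ of $1/(1-q^s)$. I begin by extending the range of summation: since $\sum_{i \geq n}(1+q^{f(n)-i})^{-1}$ is a geometric tail of order $q^{n-f(n)}$,
\[
 \mu_n = F(q^{f(n)}) + \Oh(q^{n-f(n)}), \qquad F(x) := \sum_{i \geq 0}\frac{1}{1+q^{-i}x}.
\]

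Next I invoke the Mellin transform of $f_0(y) := 1/(1+y)$, namely $\pi/\sin(\pi s)$ on $0 < \Re s < 1$, together with the harmonic-sum Dirichlet series $\sum_{i \geq 0}(q^{-i})^{-s} = 1/(1-q^s)$. This yields the Mellin--Barnes representation
\[
 F(x) = \frac{1}{2\pi i}\int_{(c)}\frac{\pi \, x^{-s}}{\sin(\pi s)(1-q^s)}\,\dd s, \qquad 0 < c < 1,
\]
and substituting $x = q^{f(n)}$ turns the numerator into $\pi q^{-f(n)s}$.

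Now I shift the contour from $\Re s = c$ to $\Re s = -1/2$. On the new vertical line the integrand is bounded, up to a constant depending only on $q$, by $q^{f(n)/2}/\cosh(\pi t)$, which is integrable in $t$; hence the remaining integral is $\Oh(q^{f(n)/2})$. The poles crossed lie strictly in the strip $-1/2 < \Re s < c$: only $s = 0$ and $s_k := 2\pi i k/\log q$ for $k \neq 0$, since the negative-integer poles of $1/\sin(\pi s)$ stay to the left. At $s = 0$ there is a double pole: expanding $\pi/[\sin(\pi s)(1-q^s)] = 1/(Ls^2) + 1/(2s) + \Oh(1)$ with $L := -\log q$, together with $q^{-f(n)s} = 1 + f(n)Ls + \Oh(s^2)$, one finds the $1/s$-coefficient of the product to be $f(n) + 1/2$. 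At each $s_k$ the simple pole of $1/(1-q^s)$ contributes; pairing $k$ with $-k$ and using $\sin(iz) = i\sinh z$ together with $q^{-f(n)s_k} = e^{-2\pi i k f(n)}$, the paired residue collapses to $2\pi\sin(2kf(n)\pi)/[\log q\,\sinh(2k\pi^2/\log q)]$, which is precisely the $k$-th term of the series in~\eqref{equ:satzKonstWag0}.

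Combining the two residue families with the initial truncation error reproduces~\eqref{equ:satzKonstWag0} with total error $\Oh(q^{f(n)/2}) + \Oh(q^{n-f(n)}) = \Oh(q^{\min(f(n)/2, n-f(n))})$, as stated. The main technical hurdle is the double pole at $s = 0$: one must retain the linear Taylor term of $q^{-f(n)s}$, since it is precisely this that combines with the $1/s^2$ piece of the Mellin kernel to produce the $f(n)$ contribution to the residue; the constant $1/2$ and all trigonometric contributions are then routine. The choice of stopping the contour at $\Re s = -1/2$ is exactly what produces the $q^{f(n)/2}$ exponent in the error term, and shifting further left would instead require collecting the infinitely many residues at negative integers, which would together reconstitute the alternating series $\sum_{n\geq 1}(-1)^{n+1}q^{n(f(n)+1)}/(1-q^n)$ arising from the direct expansion of $F(x)$ for $x<1$.
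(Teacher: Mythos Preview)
Your proof is correct and follows essentially the same Mellin-transform approach as the paper's own: extend the sum to infinity at cost $\Oh(q^{n-f(n)})$, compute the Mellin transform of the harmonic sum, shift the contour to $\Re s=-\tfrac12$, and collect residues at $s=0$ and $s=2\pi ik/\log q$. The paper is slightly more explicit about justifying the contour shift, choosing horizontal segments at heights $T_k=\tfrac{2\pi}{\log q}(k+\tfrac14)$ that avoid the poles of $1/(1-q^s)$ and showing these integrals vanish as $k\to\infty$, whereas you take this routine step for granted.
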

\begin{proof}
We write
\[
 \mu_n = \sum_{i=0}^{n-1} \frac{1}{1+q^{f(n) - i }} = \sum_{i=0}^{\infty} \frac{1}{1+q^{f(n) - i}} + \Oh \left( q^{n-f(n)} \right)
\]
and apply the Mellin transformation~\cite{FlaGouDum95} to
\[
 h(t) = \sum_{i=0}^{\infty} \frac{1}{1+tq^{-i}}.
\]
By the linearity of the Mellin transformation~$\mathcal{M}$ and the properties $\mathcal{M} \left( \frac{1}{1+t} \right) = \frac{\pi}{\sin \pi s}$ and $\mathcal{M}h(\alpha t)(s)= \alpha^{-s}\mathcal{M}(h)(s)$, we see that
\[
 \mathcal{M}(h)(s) = \sum_{i=0}^{\infty} \left( q^{-i}\right) ^{-s} \frac{\pi}{\sin \pi s} = \frac{1}{1-q^s} \frac{\pi}{\sin \pi s}. 
\]
Exchanging~$\mathcal{M}$ and the sum is permitted by the monotone convergence theorem. {}From the inverse transformation formula we get
\begin{equation}
 h \left( q^{f(n)} \right) = \frac{1}{2 \pi i } \int\limits_{c-i\infty}^{c+i\infty} q^{-f(n)s} \frac{1}{1-q^s} \frac{\pi}{\sin \pi s} \dd s 
\end{equation}
for $c\in (0,1)$. To evaluate this integral, we choose the integration contour $\gamma_k = \gamma_{k,1} \cup \gamma_{k,2} \cup \gamma_{k,3} \cup \gamma_{k,4}$ with
\begin{eqnarray*}
\gamma_{k,1} & = & \left\{ s\ | \ s=\frac12 + iv\ : \ -T_k \leq v \leq T_k \right\}\\
\gamma_{k,2} & = & \left\{ s\ | \ s=u +iT_k:\ -\frac12 \leq u \leq \frac12 \right\}\\
\gamma_{k,3} & = & \left\{ s\ | \ s=-\frac12 + iv\ : \ -T_k \leq v \leq T_k  \right\}\\
\gamma_{k,4} & = & \left\{ s\ | \ s=u -iT_k:\ -\frac12 \leq u \leq \frac12 \right\}
\end{eqnarray*}
where $T_k=\frac{2\pi}{\log q}\left( k +\frac14 \right)$. Then
\[
 h \left( q^{f(n)} \right) = \lim_{k\to \infty} \frac{1}{2\pi i}\int\limits_{\gamma_{k,1}} = -\lim_{k\to \infty} ( \frac{1}{2\pi i}\int\limits_{\gamma_{k,2}}
+\frac{1}{2\pi i}\int\limits_{\gamma_{k,3}}+ \frac{1}{2\pi i}\int\limits_{\gamma_{k,4}}  + \sum \textnormal{residues} ),
\]
since the integral on the left side exists. Now we estimate the integrals on the right side.
\begin{eqnarray*}
\left| \int\limits_{\gamma_{k,3}} q^{-f(n)s} \frac{1}{1-q^s} \frac{\pi}{\sin \pi s} \dd s \right| & = & 
 \left| \int\limits_{-T_k}^{T_k} q^{-f(n) (-\frac12+iv)} \frac{1}{1-q^{-\frac12+iv}}\frac{\pi}{\sin(\pi(-\frac12+iv))} \dd v \right| \\
& \leq & \pi q^{\frac{f(n)}{2}} \int\limits_{-\infty}^{\infty} \frac{1}{\left| 1-q^{-\frac12+iv}\right|}\frac{1}{\left| \sin(\pi(-\frac12+iv)) \right|} \dd v \\
& \leq & \pi q^{\frac{f(n)}{2}} \frac{1}{1-q^{-\frac12}} \int\limits_{-\infty}^{\infty} \frac{1}{\sqrt{ \sin^2 \frac{\pi}{2} + \sinh^2 \pi v }} \dd v \\
& = & q^{\frac{f(n)}{2}} \frac{\pi}{1-q^{-\frac12}}
\end{eqnarray*}
\begin{eqnarray*}
\left| \int\limits_{\gamma_{k,2}} q^{-f(n)s} \frac{1}{1-q^s} \frac{\pi}{\sin \pi s} \dd s \right| & = &
 \left| \int\limits_{-\frac12}^{\frac12} q^{-f(n) (u+iT_k)} \frac{1}{1-q^{u+iT_k}}\frac{\pi}{\sin(\pi(u+iT_k))} \dd u \right| \\
& \leq & \pi \int\limits_{-\frac12}^{\frac12} q^{-f(n) u} \frac{1}{\left| 1-q^{u+iT_k} \right| }\frac{1}{\sqrt{\sin^2\pi u + \sinh^2 \pi T_k}} \dd u \\
& \leq & \pi q^{-\frac12 f(n) } \int\limits_{-\frac12}^{\frac12} \frac{1}{q^u\left| \sin (T_k\log q) \right|} \frac{1}{\sqrt{\sinh^2 \pi T_k}} \dd u \\
& \leq & \frac{\pi q^{-\frac12 f(n) }}{\sinh \pi T_k} \int\limits_{-\frac12}^{\frac12} \frac{1}{q^u} \dd u \stackrel{k \to \infty}{\longrightarrow} 0
\end{eqnarray*}
The integral over $\gamma_{k,4}$ is treated similarly.
Now let us compute the residues: $\frac{1}{1-q^s}$ has simple poles at $z_k:=\frac{2\pi i k}{\log q}$, and $\frac{1}{\sin \pi s}$ has a simple pole at~$0$. First we consider the residue at~$z_k$ for $k\neq 0$: 
\begin{eqnarray*}
\lim_{z\to z_k} (z-z_k)q^{-f(n)z}\frac{1}{1-q^z} \frac{\pi}{\sin \pi z} & = & q^{-f(n) \frac{2\pi i k}{\log q} }\frac{\pi}{\sin\left( \frac{2\pi i k}{\log q}\pi \right)}
\lim_{z \to z_k} \frac{z-z_k}{1-q^z} \\
& = & e^{-f(n) 2 \pi i k } \frac{\pi}{i\sinh\left( \frac{2\pi^2k}{\log q} \right)} \frac{1}{-\log q}
\end{eqnarray*}
The sum extended over the residues at the poles~$z_k$, $k\neq0$, therefore equals
\[
 \sum_{k \neq 0} \frac{i\pi e^{-2if(n)k\pi}}{\log q \sinh\left(\frac{2k\pi^2}{\log q} \right)} .
\]
Putting together the summands $k$ and $-k$,
\begin{eqnarray*}
 e^{-2if(n)k\pi} - e^{2if(n)k\pi} & = & \cos ( -2if(n)k\pi) + i \sin(-2f(n)k\pi) - \cos(2f(n)k\pi) - i\sin(2f(n)k\pi) \\
&=& -2i \sin(2f(n)k\pi),
\end{eqnarray*}
we obtain
\[
 \sum_{k > 0 }\frac{2\pi \sin(2kf(n) \pi)}{\log q \sinh\left(\frac{2k\pi^2}{\log q} \right)} .
\]
Finally, by the expansions
\begin{eqnarray*}
 q^{-f(n) s} & = & 1-f(n) \log q \ s + \Oh(s^2)  \\
 \frac{1}{1-q^s} & = & -\frac{1}{\log q \ s} + \frac12 + \Oh(s) \\
 \frac{\pi}{\sin \pi s} & = & \frac{1}{s} + \Oh(s),
\end{eqnarray*}
the residue at $z_0=0$ 
is $f(n) + \frac12$.
\end{proof}

It is worthwhile to evaluate the sum in~\eqref{equ:satzKonstWag0} in the limit~$q\to0$.
First note that, if~$n$ is fixed and~$q\to 0$, then~\eqref{eq:mu} easily yields
\[
 \mu _n \to 
 \begin{cases}
   f(n)+1-\{ f(n) \} & \text{if}\ \{ f(n) \} >0 \\
   f(n) + \tfrac12 & \text{if}\  \{ f(n) \} =0 
 \end{cases}.
\]
Moreover, the~$\Oh$-term in~\eqref{equ:satzKonstWag0} is~$\oh(1)$ for~$q\to0$,
as follows readily {}from the estimates in the proof of Theorem~\ref{satz:konstWag0}.
These two facts combined imply
\[
  \lim_{q\to0} \sum_{k > 0} \frac{2\pi\sin(2kf(n)\pi)}{\log q \sinh \left( \frac{2k\pi^2}{\log q} \right)}
  = \begin{cases}
   \tfrac12 - \{f(n)\} & \text{if}\ \{ f(n) \} >0 \\
   0 & \text{if}\  \{ f(n) \} =0 
 \end{cases}.
\]
Note that in the special case~$f(n)=\alpha n$ with positive~$\alpha$, the summands
tend to the summands of the Fourier series of~$\tfrac12 - \{f(n)\}$, if~$\{\alpha n\}>0$.

After this analysis of the means~$\mu_n$, we turn our attention to the
convergence of the distributions. As mentioned above, a sufficient
condition for convergence of $(X_n-\mu_n)/\sigma_n$ is that
$\{f(n)\}$ is constant.
\begin{lem}\label{toyo:konst0} If we choose a subsequence~$(n_k)$ such that $\{f(n_k)\} = \beta$ constant, then:
\begin{itemize}
 \item[(a)] For $k\to \infty$
\[
 \mu_{n_k} = f(n_k) + c(\beta,q) + \oh(1) , 
\]
with $c(\beta,q)$ is a constant depending on $\beta$ and $q$.
 \item[(b)] 
   \begin{itemize}
     \item[(i)] $c(0,q) = c(1/2,q) = 1/2$
     \item[(ii)] $c(\beta,q) + c(-\beta,q) = 1$
   \end{itemize}
\end{itemize}
\end{lem}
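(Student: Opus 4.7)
The plan is to reduce both parts to the two asymptotic representations already proven above. Part (a) falls out immediately from Theorem~\ref{satz:konstGer0}: along the subsequence $(n_k)$ the fractional part $\{f(n_k)\}=\beta$ is frozen, so the quantity $c(\{f(n_k)\},q)$ appearing there is simply the constant
\[
 c(\beta,q) = 1 - \frac{1}{1+q^{-\beta}} - \beta -\sum_{\ell\geq 0}\frac{1}{1+q^{-\ell-\beta-1}} + \sum_{\ell\geq 0}\frac{1}{1+q^{-\ell+\beta-1}},
\]
and the claimed expansion $\mu_{n_k}=f(n_k)+c(\beta,q)+\oh(1)$ follows.

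For part (b), rather than wrestle with this elementary series, I would switch to the Mellin-based representation from Theorem~\ref{satz:konstWag0}. Since $\sin$ has period $2\pi$ and $f(n_k)=\lfloor f(n_k)\rfloor+\beta$ with $\lfloor f(n_k)\rfloor\in\Z$, we have $\sin(2kf(n_k)\pi)=\sin(2k\beta\pi)$ for every integer $k$. The standing hypotheses $f(n)\to\infty$ and $n-f(n)\to\infty$ make the $\Oh(q^{\min(f(n_k)/2,\,n_k-f(n_k))})$ term vanish, so comparing with part~(a) identifies
\[
 c(\beta,q) = \tfrac12 + \sum_{k>0} \frac{2\pi \sin(2k\beta\pi)}{\log q \,\sinh(2k\pi^2/\log q)}.
\]
From this Fourier-type form both assertions are transparent: in (i), $\beta=0$ and $\beta=\tfrac12$ both make every $\sin(2k\beta\pi)$ vanish (the second because $\sin(k\pi)=0$), leaving $c=\tfrac12$; and in (ii), interpreting $c(-\beta,q)$ as the natural extension of the series formula to $\beta\in\R$, the oddness of sine gives $c(-\beta,q)-\tfrac12=-(c(\beta,q)-\tfrac12)$, i.e.\ $c(\beta,q)+c(-\beta,q)=1$.

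There is no substantial obstacle; the real work was already carried out in Theorems~\ref{satz:konstGer0} and~\ref{satz:konstWag0}. The only mild subtlety is that the elementary representation of $c(\beta,q)$ in Theorem~\ref{satz:konstGer0} does not manifestly exhibit the symmetries required for~(b), whereas the residue calculation of Theorem~\ref{satz:konstWag0} expresses $c(\beta,q)-\tfrac12$ as an odd function of $\beta$ with half-period $\tfrac12$, rendering both (i) and (ii) immediate. Thus the proof is essentially a bookkeeping exercise collecting the two prior theorems.
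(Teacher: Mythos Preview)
Your proposal is correct and matches the paper's approach: the paper's one-line proof simply says to use~\eqref{equ:satzKonstWag0} and simple properties of $\sin$, which is precisely your argument for part~(b). The only cosmetic difference is that you invoke Theorem~\ref{satz:konstGer0} for part~(a) while the paper cites~\eqref{equ:satzKonstWag0} for everything, but since both theorems yield the same $c(\beta,q)$ this is immaterial.
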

\begin{proof}
Use (\ref{equ:satzKonstWag0}) and simple properties of $\sin$.
\end{proof}

\begin{lem}\label{toyo:konstFloor0} Set $\beta= \{f(n)\}$. Then
\[
\lfloor c(\beta, q) + \beta \rfloor = \left\{ \begin{array}{ll} 0 & 0 \leq \beta < 1/2 \\ 1 & 1/2 \leq \beta < 1 \end{array} \right.
\]
\end{lem}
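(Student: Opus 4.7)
The plan is to rewrite $g(\beta) := c(\beta,q) + \beta$ in a form where the sign of each summand is transparent, and then to exploit the reflection symmetry {}from Lemma~\ref{toyo:konst0}(b)(ii). First I would introduce the auxiliary function $h(x) := 1/(1+q^{-x})$, which is strictly decreasing on $\R$ since $q \in (0,1)$. Starting {}from the explicit expression for $c(\beta,q)$ in Theorem~\ref{satz:konstGer0}, a routine shift of the summation index in the first infinite sum absorbs the isolated term $1/(1+q^{-\beta})$ and produces the compact identity
\[
  g(\beta) \;=\; 1 + \sum_{\ell \geq 0} \bigl[h(\ell+1-\beta) - h(\ell+\beta)\bigr].
\]
This rearrangement is the only bookkeeping step; everything that follows is a sign analysis plus one invocation of symmetry.

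For $\beta \in [0, 1/2)$, the comparison $\ell + 1 - \beta > \ell + \beta$ and the monotonicity of $h$ make every summand nonpositive, and strictly negative at $\ell = 0$, giving $g(\beta) < 1$. For the matching lower bound I would regroup the series (by relabeling the second sum) as
\[
  1 - g(\beta) \;=\; h(\beta) + \sum_{k \geq 1}\bigl[h(k+\beta) - h(k-\beta)\bigr],
\]
observe that each bracketed term is nonpositive, and conclude $1 - g(\beta) \leq h(\beta) \leq h(0) = 1/2$. Hence $g(\beta) \in [1/2, 1)$ and $\lfloor g(\beta)\rfloor = 0$.

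For $\beta \in [1/2, 1)$ the signs in the compact formula flip, so every summand is nonnegative and $g(\beta) \geq 1$ immediately. The upper bound $g(\beta) < 2$ is the subtlest point; attempting to bound the series directly looks cumbersome, so instead I would invoke symmetry. Theorem~\ref{satz:konstWag0} exhibits $c(\cdot, q)$ as $\tfrac12$ plus a $1$-periodic Fourier sum, which promotes Lemma~\ref{toyo:konst0}(b)(ii) to $c(\beta,q) + c(1-\beta,q) = 1$, i.e., $g(\beta) + g(1-\beta) = 2$. Since $1-\beta \in (0, 1/2]$ is covered by the first case, $g(1-\beta) \geq 1/2$, whence $g(\beta) \leq 3/2 < 2$ and $\lfloor g(\beta)\rfloor = 1$. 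The main obstacle is the opening rearrangement to the compact form; once that identity is in hand, the sign structure and the reflection do the rest.
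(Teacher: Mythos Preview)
Your argument is correct. You arrive at the same compact form that the paper uses, namely
\[
  g(\beta)-1 \;=\; \sum_{\ell\geq 0}\bigl[h(\ell+1-\beta)-h(\ell+\beta)\bigr],
\]
which in the paper's notation is $\hat c(\beta,q):=c(\beta,q)-1+\beta$. From there the two proofs diverge in execution. The paper observes that each summand in $\hat c$ is strictly increasing in~$\beta$ (each $-h(\ell+\beta)$ and each $h(\ell+1-\beta)$ is), so $\hat c$ itself is strictly increasing; it then just evaluates $\hat c$ at the three nodes $\beta=0,\tfrac12,1$, obtaining $-\tfrac12,\,0,\,\tfrac12$, and reads off $g(\beta)\in[\tfrac12,1)$ on $[0,\tfrac12)$ and $g(\beta)\in[1,\tfrac32)$ on $[\tfrac12,1)$.

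Your route replaces this monotonicity-plus-endpoint argument by a sign analysis of the summands for one inequality in each range, a separate regrouping $1-g(\beta)=h(\beta)+\sum_{k\geq1}[h(k+\beta)-h(k-\beta)]$ for the lower bound on $[0,\tfrac12)$, and the reflection $g(\beta)+g(1-\beta)=2$ (obtained from the Fourier form in Theorem~\ref{satz:konstWag0}) for the upper bound on $[\tfrac12,1)$. This is a bit longer but perfectly sound, and it makes the role of the $\tfrac12$-periodicity/reflection of $c(\cdot,q)$ explicit, whereas the paper's proof hides it inside the endpoint evaluations $\hat c(0)=-\tfrac12$ and $\hat c(1)=\tfrac12$.
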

\begin{proof}
We define
\[
 \hat c ( \{ f(n) \},q) := c( \{ f(n) \},q) - 1 + \{ f(n) \}.
\]
 By Theorem~\ref{satz:konstGer0}, $\hat c (\beta, q)$ is strictly increasing in $\beta$. Therefore we have for $ 0 \leq \beta < 1/2$
\[
 \hat c (0,q) = -\frac12 \leq \hat c (\beta ,q) < \hat c (1/2, q) = - \frac{1}{1-q^{-1/2}} + \frac{1}{1-q^{-1/2}} = 0.
\]
Thus
\begin{eqnarray}\label{equ:toyoKonstFloor01}
 \frac12 - \beta \leq c (\beta ,q) < 1 - \beta \qquad \textnormal{and} \qquad \frac12 \leq c (\beta , q) + \beta < 1 .
\end{eqnarray}
Similarly, we get for $ 1/2 \leq \beta < 1$
\begin{eqnarray}\label{equ:toyoKonstFloor02}
 1-\beta \leq c(\beta, q) < \frac12 \qquad \textnormal{and} \qquad 1 \leq c(\beta, q)+\beta < \frac12 + \beta <\frac32.
\end{eqnarray}

\end{proof}

\begin{lem}\label{toyo:mufloor} 
\begin{itemize}
 \item[(i)] If $\beta \neq \frac12$, then $f(n) + c(\beta,q) \not \in \Z$. Thus 
\[
 \lfloor \mu_n \rfloor = \lfloor f(n) + c(\beta,q) \rfloor = \lfloor f(n) \rfloor + \lfloor \beta + c(\beta,q) \rfloor .
\]
\item[(ii)] For $\beta = \frac12$,
\[
\mu_n > f(n)+\frac12 \ \ \textnormal{  if } \ 2f(n) \leq n-1 \qquad \textnormal{and} \qquad \mu_n < f(n)+\frac12 \ \ \textnormal{   if  } \ 2f(n) \geq n .
\]
Thus
\[
\lfloor \mu_n \rfloor = f(n)+\frac12\ \ \textnormal{  if } \ 2f(n) \leq n-1 \qquad \textnormal{and} \qquad \lceil \mu_n \rceil = f(n)+\frac12 \ \ \textnormal{   if  } \ 2f(n) \geq n .
\]
\end{itemize}
\end{lem}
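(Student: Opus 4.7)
The plan for part (i) is to combine Theorem~\ref{satz:konstGer0}, which gives $\mu_n = f(n) + c(\beta,q) + \oh(1)$, with the sharp bounds on $\beta + c(\beta,q)$ from Lemma~\ref{toyo:konstFloor0}. That lemma locates $\beta + c(\beta,q)$ in $[\tfrac12,1)$ when $\beta<\tfrac12$ and in $[1,\tfrac32)$ when $\beta>\tfrac12$; the unique integer in $[\tfrac12,\tfrac32)$ is $1$, and the strict monotonicity of $\hat c(\cdot,q)$ already invoked in the proof of Lemma~\ref{toyo:konstFloor0}, together with the equality $\hat c(\tfrac12,q)=0$, forces $\beta+c(\beta,q)\ne 1$ for every $\beta\ne\tfrac12$. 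Hence $f(n)+c(\beta,q)=\lfloor f(n)\rfloor+(\beta+c(\beta,q))$ sits at a positive distance from the nearest integer, so for $n$ large the $\oh(1)$ remainder cannot change the floor, yielding
\[
 \lfloor\mu_n\rfloor=\lfloor f(n)+c(\beta,q)\rfloor=\lfloor f(n)\rfloor+\lfloor\beta+c(\beta,q)\rfloor,
\]
the last equality being simply that $\lfloor f(n)\rfloor\in\Z$.

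For part (ii) I would write $f(n)=m+\tfrac12$ with $m:=\lfloor f(n)\rfloor$ and exploit the pairing identity
\[
 \frac{1}{1+q^{f(n)-i}}+\frac{1}{1+q^{f(n)-(2m+1-i)}}=1,
\]
which follows from $\tfrac{1}{1+u}+\tfrac{1}{1+u^{-1}}=1$ applied to $u=q^{f(n)-i}$. If $2f(n)\le n-1$ (so $n\ge 2m+2$), then the indices $\{0,\dots,2m+1\}$ all lie in $\{0,\dots,n-1\}$ and match into $m+1$ pairs contributing $m+1=f(n)+\tfrac12$ exactly, while the leftover terms for $i=2m+2,\dots,n-1$ are strictly positive whenever $n>2m+2$, giving $\mu_n>f(n)+\tfrac12$. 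If $2f(n)\ge n$ (so $n\le 2m+1$), I would hypothetically extend the sum to $i=0,\dots,2m+1$: the extended sum pairs perfectly to $m+1$, so $\mu_n=m+1-\sum_{i=n}^{2m+1}(1+q^{f(n)-i})^{-1}$, and the subtracted tail is strictly positive, yielding $\mu_n<f(n)+\tfrac12$. The floor/ceiling assertions then follow from $\mu_n=m+1+\oh(1)$ (Theorem~\ref{satz:konstGer0} combined with $c(\tfrac12,q)=\tfrac12$), which confines $\mu_n$ to $[m+1,m+2)$ in Case~1 and to $(m,m+1)$ in Case~2 once $n$ is large enough along the relevant subsequence.

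The main obstacle is a careful treatment of boundary behavior. In part (i) the delicate point is ruling out $\beta+c(\beta,q)=1$ for $\beta\ne\tfrac12$, which rests on the strict monotonicity of $\hat c(\cdot,q)$ noted in the proof of Lemma~\ref{toyo:konstFloor0}. In part (ii) the subtlety is the edge case $n=2m+2$ (i.e., $2f(n)=n-1$): here the pairing exhausts all indices and produces $\mu_n=f(n)+\tfrac12$ exactly, so the strict inequality claimed in the lemma degenerates to an equality; nevertheless the floor identity $\lfloor\mu_n\rfloor=f(n)+\tfrac12$ still holds because $f(n)+\tfrac12\in\Z$. Beyond this cosmetic caveat, the argument is essentially a matter of bookkeeping, once one observes that the tail $\sum_{i=n}^{\infty}(1+q^{f(n)-i})^{-1}=\Oh(q^{n-f(n)})=\oh(1)$ is harmless under the standing assumption $n-f(n)\to\infty$.
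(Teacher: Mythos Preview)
Your proposal is correct and follows essentially the same route as the paper. For (i) the paper likewise adds $f(n)$ to the inequalities~\eqref{equ:toyoKonstFloor01} and~\eqref{equ:toyoKonstFloor02} to trap $f(n)+c(\beta,q)$ strictly between consecutive integers, and for (ii) it uses the very same pairing $\tfrac{1}{1+u}+\tfrac{1}{1+u^{-1}}=1$ on the index range $\{0,\dots,2f(n)\}$; your edge-case remark about $2f(n)=n-1$ yielding equality also appears verbatim in the paper's proof.
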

\begin{proof}
(i): {}From (\ref{equ:toyoKonstFloor01}) we get for $ 0 \leq \beta < 1/2$
\[
 f(n) + \frac12 - \beta < f(n) + c(\beta ,q) < f(n) + 1 -\beta
\]
and therefore
\[
 \lfloor f(n) \rfloor + \frac12 < f(n) + c(\beta, q) < \lfloor f(n) \rfloor + 1.
\]
Similarly, {}from (\ref{equ:toyoKonstFloor02}) we get for $ 1/2 < \beta < 1$
\[
 \lfloor f(n) \rfloor + 1 < f(n) + c(\beta ,q) < \lfloor f(n) \rfloor + \frac32.
\]
(ii): Assume $2f(n) \leq n-1$. Then
\begin{eqnarray*}
\sum_{i=0}^{n-1} \frac{q^{i-f(n)}}{1+q^{i-f(n)}} & = & \sum_{i=0}^{f(n) - \frac12} \frac{q^{i-f(n)}}{1+q^{i-f(n)}} +
  \sum_{i=f(n)+\frac12}^{2f(n)}\frac{q^{i-f(n)}}{1+q^{i-f(n)}} + \sum_{2f(n)+1}^{n-1}
   \frac{q^{i-f(n)}}{1+q^{i-f(n)}} \\
& = & \sum_{i=0}^{f(n)-\frac12} \frac{q^{-i-\frac12}}{1+q^{-i-\frac12}} + \sum_{i=0}^{f(n)-\frac12} \frac{q^{i+\frac12}}{1+q^{i+\frac12}} + \oh(1) \\
& = & f(n) + \frac12 + \oh(1).
\end{eqnarray*}
We used $\frac{q}{1+q} + \frac{q^{-1}}{1+q^{-1}} = 1$; the $\oh(1)$-term is non-negative (and vanishes only for $2f(n)=n-1$).
If $2f(n) \geq n$, then the third sum vanishes and the second sum just runs up to $n-1 < 2f(n)$, so $\mu_n < f(n) + \frac12$.
\end{proof}

Note that similarly to the proof of~(ii) we can prove the properties of $c(\beta,q)$ in Lemma~\ref{toyo:konst0}~(b). Especially one can directly show the following theorem for
$\beta = \frac12$ and $\beta = 0$.

\begin{satz}\label{th:Hauptsatz0}
 Let $(n_k)_{k\in \N}$ be an increasing sequence of natural numbers and $X_{n_k} \sim KB(n_k,\theta_{n_k},q)$ with $\theta_{n_k} = q^{-f(n_k)}$ and $\{ f(n_k) \} = \beta$ constant. Recall that we always assume~$f(n)\to\infty$ and $n-f(n)\to\infty$.
 Then $(X_{n_k}-\mu_{n_k})/\sigma_{n_k}$ converges for $k \to\infty$ to a limit $X$, with
\begin{equation}\label{equ:Hauptsatz01}
 \P \left( X = -\left(\beta + c \right)\frac{1}{\sigma} + \frac{1}{\sigma}x\right) = e_q(q)e_q(-q^{\beta})e_q(-q^{1-\beta})q^{(x-1)(x-2\beta)/2},
 \qquad x\in\Z,
\end{equation}
if $\beta < 1/2$,
\begin{equation}\label{equ:Hauptsatz02}
 \P \left( X = -\left(\beta + c -1 \right)\frac{1}{\sigma} + \frac{1}{\sigma}x\right) = e_q(q)e_q(-q^{\beta})e_q(-q^{1-\beta})q^{x((1+x)-2\beta)/2},
 \qquad x\in\Z,
\end{equation}
if $\beta > 1/2$ and
\begin{equation}\label{equ:Hauptsatz03}
 \P \left( X = \frac{1}{\sigma}x\right) = e_q(q)e_q(-q^{1/2})^2 q^{x^2/2}, \qquad x\in\Z,
\end{equation}
if $\beta=1/2$, where $c=c(\beta,q)$ is the constant {}from Lemma~\ref{toyo:konst0} and $\sigma = \lim_{k\to\infty} \sigma_{n_k}$.
The distribution of~$X$ is symmetric iff $\beta=0$ or $\beta=1/2$.
\end{satz}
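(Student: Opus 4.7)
The approach is to compute the pointwise limit of the explicit probability mass function
\[
 \P(X_{n_k} = y) = \qbinom{n_k}{y}{q}\, \frac{q^{y(y-1)/2 - y f(n_k)}}{(-q^{-f(n_k)}, q)_{n_k}}
\]
along a suitable integer shift $y_k$, and then translate this into convergence in distribution of $(X_{n_k}-\mu_{n_k})/\sigma_{n_k}$. Write $F_k := \lfloor f(n_k)\rfloor$; for $\beta<1/2$ I take $y_k = F_k + x$, for $\beta>1/2$ I take $y_k = F_k + 1 + x$, and for $\beta=\tfrac12$ I take $y_k = f(n_k)+\tfrac12 + x$ (an integer), with $x\in\Z$ fixed. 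By Lemma~\ref{toyo:mufloor} these are precisely of the form $\lfloor\mu_{n_k}\rfloor + x$ (or an analogous integer shift of $\mu_{n_k}$), so they will produce the claimed shifted supports.

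The key step is the asymptotic evaluation of $(-q^{-f(n_k)},q)_{n_k}$. Splitting the product at $i = F_k$ and using $1 + q^{i-f(n_k)} = q^{i-f(n_k)}(1+q^{f(n_k)-i})$ in the lower part, one gets
\[
  \prod_{i=0}^{F_k}\bigl(1+q^{i-f(n_k)}\bigr) = q^{-(F_k+1)(F_k+2\beta)/2}\prod_{j=0}^{F_k}\bigl(1+q^{j+\beta}\bigr),
\]
and the remaining finite product tends to $E_q(q^\beta)$ as $F_k\to\infty$. A change of index in the upper part yields $\prod_{j=0}^{n_k-F_k-2}(1+q^{j+1-\beta}) \to E_q(q^{1-\beta})$, since $n_k-F_k\to\infty$. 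The $q$-binomial coefficient $\qbinom{n_k}{y_k}{q}$ converges to $1/(q,q)_\infty = e_q(q)$ because all three indices $y_k$, $n_k-y_k$, $n_k$ tend to infinity. Combining the $q$-exponent $y_k(y_k-1)/2 - y_k f(n_k)$ from the pmf with the diverging factor $q^{(F_k+1)(F_k+2\beta)/2}$ extracted above, the $F_k$-dependent terms cancel and routine algebra reduces the net exponent to $(x-1)(x-2\beta)/2$ in the case $\beta<\tfrac12$; replacing $x$ by $x+1$ yields $x((1+x)-2\beta)/2$ for $\beta>\tfrac12$, and setting $\beta=\tfrac12$ gives $x^2/2$. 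The identity $E_q(z)e_q(-z)=1$ converts the limiting denominator into $e_q(-q^\beta)e_q(-q^{1-\beta})$, delivering the right-hand sides of \eqref{equ:Hauptsatz01}--\eqref{equ:Hauptsatz03}.

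To upgrade the pointwise pmf convergence to convergence in distribution of $(X_{n_k}-\mu_{n_k})/\sigma_{n_k}$, I would first observe that $\sigma_{n_k}^2$ itself converges (both sums in \eqref{equ:lemSigmaKonv0} depend only on $\beta$ and are taken over ranges tending to infinity), so $\sigma_{n_k}\to\sigma$; together with Lemma~\ref{toyo:konst0} this yields $(y_k-\mu_{n_k})/\sigma_{n_k}\to$ the desired shift plus $x/\sigma$. That the limiting pmf sums to~$1$ follows from Jacobi's triple product identity, after which Scheff\'e's lemma gives convergence in distribution. For the symmetry claim, the support of $X$ has the form $\{a + x/\sigma : x\in\Z\}$ and $X\sim -X$ requires $2a\sigma\in\Z$; Lemma~\ref{toyo:konstFloor0}, combined with the strict monotonicity of $\hat c$ noted in its proof, rules this out for $0<\beta<\tfrac12$ and $\tfrac12<\beta<1$, whereas for $\beta=\tfrac12$ both support and pmf are visibly reflection invariant via $q^{x^2/2}$, and for $\beta=0$ the identity $c(0,q)=\tfrac12$ combined with the substitution $x\mapsto 1-x$ in \eqref{equ:Hauptsatz01} establishes invariance.

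The main obstacle is the $q$-exponent bookkeeping in the central step: the diverging factor $q^{(F_k+1)(F_k+2\beta)/2}$ pulled out of the $q$-Pochhammer must cancel exactly the diverging part of $q^{y_k(y_k-1)/2 - y_k f(n_k)}$, so one must track $F_k^2$, $F_k$, and constant contributions simultaneously from both sources. A secondary technical point is verifying the total mass of the limit, which is where Jacobi's triple product is the natural tool.
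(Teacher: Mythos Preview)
Your proposal is correct and follows essentially the same route as the paper: center at an integer shift of $\lfloor f(n_k)\rfloor$ (which, via Lemmas~\ref{toyo:konstFloor0} and~\ref{toyo:mufloor}, equals $\lfloor\mu_{n_k}\rfloor$ or $H(\mu_{n_k})$), split the $q$-Pochhammer $(-q^{-f(n_k)};q)_{n_k}$ at $i=\lfloor f(n_k)\rfloor$, extract the diverging $q$-power $q^{-(F_k+1)(F_k+2\beta)/2}$ from the lower block, send both remaining finite products to their infinite limits, and check that the $F_k$-dependent pieces of the exponent cancel. The paper does exactly this, with the same case split on~$\beta$ and the same symmetry argument via the strict inequalities of Lemma~\ref{toyo:konstFloor0}. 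The only notable difference is that you make explicit two points the paper leaves implicit: that the limiting weights sum to~$1$ (Jacobi's triple product) and that pointwise pmf convergence plus $\sigma_{n_k}\to\sigma$ then yields convergence in distribution of the normalized variables (Scheff\'e); this is a genuine, if minor, improvement in rigor.
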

\begin{proof}
For simplicity we write in the following $n$ instead of $n_k$. {}From (\ref{equ:lemSigmaKonv0}) one gets that the $\sigma_{n}$ converge.
Consider the case $\beta \neq 1/2$:
\begin{align}
 \P (X_n = \lfloor \mu_n \rfloor + x)& = \qbinom{n}{ \lfloor \mu_n \rfloor + x}{q} \frac{ \theta_n ^{\lfloor \mu_n \rfloor + x} q^{(\lfloor \mu_n \rfloor + x)(\lfloor \mu_n \rfloor + x-1)/2}}{\prod_{i=0}^{n-1} \left( 1+ \theta_nq^i\right)} \notag\\
&= \qbinom{n}{ \lfloor \mu_n \rfloor + x}{q} \frac{ q^{-(\lfloor \mu_n \rfloor + x)f(n)+(\lfloor \mu_n \rfloor + x)(\lfloor \mu_n \rfloor + x-1)/2}}{\prod_{i=0}^{n-1} \left( 1+ \frac{q^i}{q^{f(n)}}\right)}. \label{eq:expo}
\end{align}
The product in the denominator equals
\begin{align}
\prod_{i=0}^{n-1} \left( 1+ \frac{q^i}{q^{f(n)}}\right) & =  \prod_{i=0}^{\lfloor f(n) \rfloor} \left( 1+ \frac{q^i}{q^{f(n)}}\right) 
\prod_{i=\lfloor f(n) \rfloor +1}^{n-1} \left( 1+ \frac{q^i}{q^{f(n)}}\right) \notag \\
& =  q^{-f(n) \left( \lfloor f(n) \rfloor+1\right)+ \left( \lfloor f(n) \rfloor+1\right)\lfloor f(n) \rfloor /2}
      \prod_{i=0}^{\lfloor f(n) \rfloor} \left(q^{f(n)-i}+1\right) \notag \\
  &  \qquad    \times \prod_{i=0}^{n-\lfloor f(n) \rfloor -2}\left(1+q^{i+\lfloor f(n) \rfloor - f(n)+1} \right) \notag \\
& =  q^{-f(n) \left( \lfloor f(n) \rfloor+1\right)+ \left( \lfloor f(n) \rfloor+1\right)\lfloor f(n) \rfloor /2}
      \prod_{i=0}^{\lfloor f(n) \rfloor} \left(q^{f(n)-\lfloor f(n)\rfloor}q^{\lfloor f(n)\rfloor-i}+1\right)\notag  \\
&  \qquad \times\prod_{i=0}^{n-\lfloor f(n) \rfloor -2}\left(1+q^iq^{\lfloor f(n) \rfloor - f(n)+1} \right) \notag \\
& =  q^{-f(n) \left( \lfloor f(n) \rfloor+1\right)+ \left( \lfloor f(n) \rfloor+1\right)\lfloor f(n) \rfloor /2}
      \left( -q^{\beta};q\right)_{\lfloor f(n)\rfloor+1} \left( -q^{-\beta+1};q\right)_{n-\lfloor f(n) \rfloor-2}. \label{eq:prod}
\end{align}
The second equality uses the easy relation
\begin{equation}\label{equ:1}
  \prod_{i=0}^{n-1}(1+zq^i) = q^{n(n-1)/2}z^n \prod_{i=0}^{n-1} (1+(zq^i)^{-1}).
\end{equation}
The last two terms in~\eqref{eq:prod} tend to $e_q\left( -q^{\beta} \right)$ and $e_q\left( -q^{-\beta+1} \right)$.
The $q$-binomial coefficient in~\eqref{eq:expo} tends to~$e_q(q)$. The exponent of~$q$ resulting
{}from~\eqref{eq:expo} and~\eqref{eq:prod} is
\begin{align*}
 -(\lfloor \mu_n & \rfloor + x)f(n)+(\lfloor \mu_n \rfloor + x)(\lfloor \mu_n \rfloor + x-1)/2
    +f(n) \left( \lfloor f(n) \rfloor+1\right)- \left( \lfloor f(n) \rfloor+1\right)\lfloor f(n) \rfloor /2 \\
&= \ \left( \lfloor f(n) \rfloor + \lfloor \beta +c \rfloor + x \right) \left( \lfloor f(n) \rfloor + \lfloor \beta + c \rfloor - 1 + x \right)/2
-\left( \lfloor f(n) \rfloor + \lfloor \beta +c \rfloor + x \right)f(n) \\
&\qquad + f(n) \left(  \lfloor f(n) \rfloor+1\right)- \left( \lfloor f(n) \rfloor +1\right) \lfloor f(n) \rfloor /2 \\
&= \ \frac12 (x-1+\delta)(\delta-2f(n)+2\lfloor f(n) \rfloor + x) \\
&= \frac12 (x-1+\delta )(\delta-2\beta +x),
\end{align*}
where  $c=c(\beta,q)$ and 
\[
\delta = \lfloor \beta + c \rfloor = \left\{ \begin{array}{ll} 0 & \beta < 1/2 \\ 1 & \beta > 1/2 \end{array} \right.
\]
by Lemma~\ref{toyo:konstFloor0}.  Putting things together, we obtain
\[
 \P (X_n = \lfloor \mu_n \rfloor + x) \to e_q(q)e_q\left( -q^{\beta} \right) e_q\left( -q^{-\beta+1}\right) q^{\frac{(\delta + x -1)(\delta +x-2\beta)}{2}} .
\]
By normalizing $X_n$ we get (\ref{equ:Hauptsatz01}) and (\ref{equ:Hauptsatz02}). The distribution of~$X$ is symmetric iff
\begin{align*}
 - \left( \beta + c -\lfloor \beta + c \rfloor \right) &= - \left( \beta + c -\lfloor \beta + c \rfloor \right) + 1  \\
 \Longleftrightarrow \qquad \beta + c -\lfloor \beta + c \rfloor &= \frac12.
\end{align*}
This is true for $\beta=0$ by Lemma~\ref{toyo:konst0}~(b)~(i). For $0 < \beta < \frac{1}{2}$ we have $\lfloor \beta + c \rfloor = 0$ by Lemma~\ref{toyo:konstFloor0}. But then we must have $\beta + c = \frac12$, which would contradict~\eqref{equ:toyoKonstFloor01} (since equality only holds for $\beta=0$). For $\beta>\frac{1}{2}$ we must have $\beta + c = \frac{3}{2}$ by Lemma~\ref{toyo:konstFloor0}, but this would be a contradiction to (\ref{equ:toyoKonstFloor02}).

For $\beta=1/2$ define
\[
 H(\mu_n) := \left\{ \begin{array}{ll} \lfloor \mu_n \rfloor & \textnormal{if }\  2f(n) \leq n-1 \\ \lceil \mu_n \rceil & \textnormal{if }\ 2f(n) \geq n \end{array} \right.
\]
Then
\[
\P \left( X_n = H(\mu_n) +x \right) =\qbinom{n}{ H(\mu_n) + x}{q} \frac{ q^{-(H(\mu_n) + x)f(n)+(H(\mu_n) + x)(H(\mu_n) + x-1)/2}}{\prod_{i=0}^{n-1} \left( 1+ \frac{q^i}{q^{f(n)}}\right)}.
\]
The $q$-binomial-coefficient tends to~$e_q(q)$, and the product can be transformed as above.
This time the exponent of~$q$ equals
\begin{align*}
 -(H(\mu_n) &+ x)f(n)+(H(\mu_n) + x)(H(\mu_n) + x-1)/2 +f(n) \left( \lfloor f(n) \rfloor+1\right) \\
 &\qquad - \left( \lfloor f(n) \rfloor+1\right)\lfloor f(n) \rfloor /2 \\
& = -(f(n)+\frac12 + x)f(n)+(f(n)+\frac12 + x)(f(n)-\frac12 + x)/2 \\
&\qquad  +f(n) \left( f(n)-\frac12 +1\right)- \left( f(n)-\frac12+1\right)\left( f(n)-\frac12 \right) /2 \\
& = \frac{x^2}{2}.
\end{align*}
So we have
\[
 \P \left( X_n = H(\mu_n) +x \right) \to e_q(q)e_q\left( -q^{\frac{1}{2}}\right)^2q^{\frac{x^2}{2}} .
\]
By normalizing $X_n$ we get (\ref{equ:Hauptsatz03}).
\end{proof}

The discrete normal distribution is defined by
\[
 \P (X=x) = \frac{q^{-x\alpha}q^{x^2/2}}{\sum_{k=-\infty}^{\infty} q^{-k\alpha}q^{k^2/2}} \qquad \alpha \in \R, \qquad x\in \Z .
\]
So the limit distributions in the preceding theorem are normalized discrete normal distributions with parameters 
\[
\begin{array}{ll} 
\alpha = \frac12 +\beta & \textnormal{if } \beta < \frac{1}{2} \\
\alpha = -\frac12 +\beta & \textnormal{if } \beta > \frac{1}{2} \\
\alpha = 0  & \textnormal{if } \beta = \frac{1}{2} \\ 
\end{array} .
\]
For $q\to 1$, they converge to the standard normal distribution, see~\cite{Sza01}.

\section{Exponentially Growing Parameter}\label{sec:schnellPar}

\begin{satz}\label{thm:conv}
 If $X_n \sim KB(n, \theta q^{-n} ,q)$, then for $n\to \infty$,
\[
 n-X_n \longrightarrow H \left( \frac{q}{\theta} \right)
\]

\end{satz}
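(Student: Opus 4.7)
Set $Y_n := n-X_n$. The plan is to show that $\P(Y_n = y)$ converges pointwise to the Heine mass function at $y$ with parameter $q/\theta$ for each fixed $y\geq 0$. Since all distributions live on~$\Z$ (extending by zero), pointwise convergence on~$\Z$ gives convergence in distribution.

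First I would write
\[
 \P(Y_n = y) = \P(X_n = n-y) = \qbinom{n}{y}{q}\,\frac{(\theta q^{-n})^{n-y}\,q^{(n-y)(n-y-1)/2}}{(-\theta q^{-n},q)_n},
\]
using $\qbinom{n}{n-y}{q}=\qbinom{n}{y}{q}$. The key algebraic step is to rewrite the denominator. Applying identity~\eqref{equ:1} with $z = \theta q^{-n}$,
\[
 (-\theta q^{-n},q)_n = \prod_{i=0}^{n-1}\bigl(1+\theta q^{-n}q^i\bigr) = q^{n(n-1)/2}(\theta q^{-n})^n \prod_{i=0}^{n-1}\bigl(1+\theta^{-1}q^{n-i}\bigr) = \theta^n q^{-n(n+1)/2}\,(-q/\theta,q)_n .
\]
Substituting this back, the $\theta$-power collapses to $\theta^{-y}$, and the exponent of~$q$ becomes
\[
 -(n-y)\tfrac{n+y+1}{2}+\tfrac{n(n+1)}{2} \;=\; \tfrac{y(y+1)}{2},
\]
a straightforward simplification. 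Thus
\[
 \P(Y_n=y) = \qbinom{n}{y}{q}\,\frac{\theta^{-y}\,q^{y(y+1)/2}}{(-q/\theta,q)_n}.
\]

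Finally I would pass to the limit term by term: $\qbinom{n}{y}{q}\to 1/(q,q)_y$ as $n\to\infty$ (since $(q,q)_n, (q,q)_{n-y}\to(q,q)_\infty$), and $(-q/\theta,q)_n \to (-q/\theta,q)_\infty = 1/e_q(-q/\theta)$. Using $q^{y(y+1)/2} = q^{y(y-1)/2}\cdot q^y$, the limit matches the Heine mass
\[
 \frac{q^{y(y-1)/2}(q/\theta)^y}{(q,q)_y}\,e_q(-q/\theta),
\]
completing the proof. There is no real obstacle: the only point that requires care is the application of~\eqref{equ:1} and the bookkeeping of the $q$-exponent, which is routine. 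Convergence in distribution then follows from pointwise convergence of probability mass functions on the discrete set~$\Z$.
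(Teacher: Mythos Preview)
Your argument is correct and follows essentially the same route as the paper's proof: define $Y_n=n-X_n$, apply identity~\eqref{equ:1} with $z=\theta q^{-n}$ to convert the denominator into $\theta^n q^{-n(n+1)/2}(-q/\theta,q)_n$, simplify the resulting $q$-exponent to $y(y+1)/2$, and then let $n\to\infty$ termwise. The only cosmetic difference is that you spell out the limiting step for the $q$-binomial coefficient and the passage from pointwise convergence of mass functions to convergence in distribution, whereas the paper states these implicitly.
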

\begin{proof}
 Define $Y_n=n-X_n$. Then, by (\ref{equ:1}) with $z= \theta q^{-n}$,
\begin{align*}
 \P (Y_n = x) &= \qbinom{n}{x}{q}\frac{ q^{-n(n-x)} q^{(n-x)(n-x-1)/2} \theta ^{n-x}}{\prod_{i=0}^{n-1} \left( 1+\theta \frac{q^i}{q^n} \right)}\\
 &= \qbinom{n}{x}{q} \frac{q^{-\frac{n^2+n-x^2-x}{2}}\theta ^{n-x}}{q^{n(n-1)/2} q^{-n^2}\theta ^n \prod_{i=0}^{n-1} (1+q^{n-i}\theta^{-1}) }\\
& =  \qbinom{n}{x}{q} \frac{q^{x(x+1)/2}\theta ^{-x}}{\prod_{i=1}^{n} (1+q^i\theta^{-1})}
= \qbinom{n}{x}{q} q^{x(x-1)/2} \left( \frac{q}{\theta}\right)^x \frac{1}{\left( -\frac{q}{\theta},q\right)_n} .
\end{align*}
Therefore
\[
 \P (Y_n=x) \to \frac{q^{x(x-1)/2}}{(q,q)_x} \left( \frac{q}{\theta}\right)^x e_q\left( -\frac{q}{\theta } \right) .
\]
\end{proof}

If the parameter grows only slightly faster than in Theorem~\ref{thm:conv}, then the limit
distribution is degenerate.

\begin{satz}
 If $X_n \sim KB(n, q^{-n-f(n)} ,q)$ with $f(n)\to \infty$ for $n\to \infty$, then for $n\to \infty$,
\[
 n-X_n \longrightarrow \delta_0,
\]
where $\delta_0$ denotes the point measure in $0$.
\end{satz}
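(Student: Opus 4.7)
The plan is to mirror the proof of Theorem~\ref{thm:conv} verbatim, with the constant $\theta$ there played by the (now $n$-dependent) quantity $q^{-f(n)}$. Setting $Y_n := n - X_n$ and applying identity~\eqref{equ:1} with $z = q^{-n-f(n)}$ exactly as in that earlier computation, all the $q^{-n}$ factors that cancelled before cancel again, producing the closed form
\[
 \P(Y_n = x) = \qbinom{n}{x}{q}\, q^{x(x-1)/2}\, q^{x(1+f(n))}\, \frac{1}{(-q^{1+f(n)},q)_n}, \qquad x = 0,1,\dots,n.
\]

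With this formula in hand, the assertion $Y_n \to \delta_0$ in distribution reduces to showing $\P(Y_n = 0) \to 1$, since $Y_n$ is a non-negative integer. For $x=0$ the only nontrivial factor is $(-q^{1+f(n)},q)_n = \prod_{i=0}^{n-1}(1+q^{1+f(n)+i})$. Bounding via $\log(1+u)\le u$ gives
\[
 0 \le \log (-q^{1+f(n)},q)_n \le \sum_{i=0}^{\infty} q^{1+f(n)+i} = \frac{q^{1+f(n)}}{1-q},
\]
and the right-hand side tends to $0$ because $f(n)\to\infty$. Hence the product tends to $1$ and $\P(Y_n=0)\to 1$. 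For completeness, the case $x\ge 1$ follows similarly: $\qbinom{n}{x}{q}\to 1/(q,q)_x$, the denominator tends to $1$, and the prefactor $q^{x(1+f(n))}$ forces $\P(Y_n=x)\to 0$.

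There is no substantive obstacle. The only point of care is that one cannot simply cite Theorem~\ref{thm:conv}, because its parameter $\theta$ was fixed while our analogous quantity $q^{-f(n)}$ depends on $n$ and diverges. However, the algebraic identity~\eqref{equ:1} used to derive the closed form of $\P(Y_n=x)$ is valid for any $z\ne 0$, so the substitution $z=q^{-n-f(n)}$ goes through unchanged; afterwards the assumption $f(n)\to\infty$ takes over and crushes all mass onto $x=0$.
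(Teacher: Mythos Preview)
Your proof is correct and follows essentially the same route as the paper: both set $Y_n=n-X_n$, use identity~\eqref{equ:1} to reduce the claim to showing $\prod_{i=0}^{n-1}(1+q^{1+f(n)+i})\to 1$, and then verify this. The only cosmetic difference is that the paper invokes Lemma~\ref{lem:prod} (with $\theta_n=q^{1+f(n)}\to 0$, giving $E_q(0)=1$) for this last step, whereas you use the direct estimate $\log(1+u)\le u$; both are equally valid.
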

\begin{proof}
 Define $Y_n=n-X_n$. Then
\begin{align*}
 \P (Y_n=x) &= \qbinom{n}{x}{q} \frac{ q^{(-n-f(n))(n-x)}q^{(n-x)(n-x-1)/2}}{\prod_{i=0}^{n-1} (1+q^{i-n-f(n)})} \\
 &= \qbinom{n}{x}{q} \frac{q^{-(n^2+2nf(n)+n-xf(n)-x^2-x)/2}}{\prod_{i=0}^{n-1} (1+q^{i-n-f(n)})}.
\end{align*}
It suffices to prove 
\[
 \lim_{n \to \infty}\frac{q^{-(n^2+2nf(n)+n)/2}}{\prod_{i=0}^{n-1} (1+q^{i-n-f(n)})} = 1,
\]
which is equivalent to
\[
 \lim_{n \to \infty}q^{(n^2+2nf(n)+n)/2}\prod_{i=0}^{n-1} (1+q^{i-n-f(n)}) = 1.
\]
By (\ref{equ:1})
\begin{align*}
 q^{(n^2+2nf(n)+n)/2}\prod_{i=0}^{n-1} (1+q^{i-n-f(n)}) 
   & =  \prod_{i=0}^{n-1} (1+q^{n+f(n)-i})\\
   & =  \prod_{i=0}^{n-1} (1+q^{f(n)+1+i}) .
\end{align*}
This tends to~$1$ as $n\to\infty$ by Lemma~\ref{lem:prod}.
\end{proof}

{\bf Acknowledgement.} We thank Stephan Wagner for the idea of using Mellin transform
asymptotics for the sequence of means in Section~\ref{sec:langPar}.

\bibliographystyle{siam}
\bibliography{literature}

\end{document}